\newcounter{ictr}
\newenvironment{ilist}{\begin{list}
                         {\textup{(\alph{ictr})}}
                         {\usecounter{ictr}
                          \setlength{\leftmargin}{0.6truein}
                          \setlength{\itemsep}{0.0truein}
                          \setlength{\labelwidth}{0.3truein}}}
                      {\end{list}}
\newcounter{nctr}
\newtheorem*{thmm}{Theorem}
\newtheorem{thm}{Theorem}
\newtheorem{lem}[thm]{Lemma}
\newtheorem{prop}[thm]{Proposition}
\theoremstyle{definition}
\theoremstyle{remark}
\newtheorem*{rem}{Remark}
\numberwithin{equation}{section}
\newcommand{\F}{\mathbb{F}}
\newcommand{\Z}{\mathbb{Z}}
\DeclareMathOperator{\rk}{rk}
\newcommand{\Hawaii}{Hawai\kern.05em`\kern.05em\relax i}
\newcommand{\Manoa}{M\=anoa}
\title{Coarse non-amenability and covers with small eigenvalues}
\subjclass[2000]{{Primary 20F65; Secondary 20F69, 58G25, 20F34}}
\keywords{Amenability, coarse embeddings, expander graphs, graph
coverings.}
\author{Goulnara Arzhantseva}
\address{Universit\'{e} de Gen\`{e}ve,
Section de Math\'{e}matiques, 2-4 rue du Li\`{e}vre, Case postale
64, 1211 Gen\`{e}ve 4, Switzerland}
\email{Goulnara.Arjantseva@unige.ch}
\author{Erik Guentner}
\address{University of \Hawaii~at \Manoa,
Department of Mathematics,
2565 McCarthy Mall, Honolulu, HI 96822}
\email{erik@math.hawaii.edu}
\thanks{The research of the first author was partially supported by the Swiss NSF, Sinergia Grant $CRSI22\_130435$.
The second author was partially supported by NSF Grant DMS-0349367.}
\begin{document}

\begin{abstract}
Given a closed Riemannian manifold $M$ and a (virtual) epimorphism $\pi_1(M)\twoheadrightarrow \mathbb{F}_2$ of the
fundamental group onto a free group of rank $2$,
we construct a tower of  finite sheeted regular covers $\left\{M_n\right\}_{n=0}^{\infty}$
of $M$ such that $\lambda_1(M_n)\to 0$ as $n\to\infty$. This is the first example of such a tower which
is not obtainable up to uniform quasi-isometry (or even up to uniform coarse equivalence)
by the previously known methods where $\pi_1(M)$ is supposed to surject onto an amenable group.
\end{abstract}

\maketitle

\section{Introduction}

Let $M$ be a closed (that is, compact and without boundary) Riemannian manifold
with fundamental group $\pi_1(M)$. A residually finite group $G$, a
surjective homomorphism $\pi_1(M)\twoheadrightarrow G$ and a nested sequence of
finite index normal subgroups of $G$ with trivial intersection gives rise
to a tower of  finite sheeted regular covers of $M$; conversely, every tower of
finite sheeted regular
covers arises in this manner. In summary, writing $G_0=G$ and $M_0=M$,
we have:
\begin{equation*}
\label{setup}\tag{$*$}
\begin{minipage}[l]{0.6\linewidth}
  $G_0 \trianglerighteqslant G_1 \trianglerighteqslant G_2 \trianglerighteqslant \cdots$,\;\;\hbox{ with }
  $\displaystyle{\bigcap_{n=0}^{\infty}} G_n= \{\, 1 \,\}$,\;\;\\[2pt]
  $\pi_1(M_n)\twoheadrightarrow G_n, \hbox{ and finite groups } \Gamma_n := G/G_n$,
\end{minipage}
\begin{minipage}[r]{0.4\linewidth}
  \xymatrix{%
     \ar@{..>}[d] &&  \ar@{..>}[d] \\
   \Gamma_2 \ar[d] && M_2 \ar[d] \\
   \Gamma_1 \ar[d] && M_1 \ar[d] \\
   \Gamma_0 && M_0.  }
\end{minipage}
\end{equation*}

In the context of spectral geometry of towers of covers one studies the asymptotic behavior
of the first non-zero eigenvalues $\lambda_1(M_n)$ of the Laplacian,
that is, of the Laplace-Beltrami operator of the individual
Riemannian manifolds $M_n$.  In particular, the following questions are classical:
\begin{ilist}
  \item Does there exist a tower with $\lambda_1(M_n)\geqslant c>0$ uniformly over $n$?
  \item Does there exist a tower with $\lambda_1(M_n)\to 0$ as $n\to\infty$?
\end{ilist}

In this note we are concerned with (b). The earliest positive result on this
question is due to Randol, who studied the case of cyclic covers using
the trace formula \cite{MR0400316}. Subsequent results of Brooks
\cite{MR840402,MR799796} and Burger \cite{MR832070} were obtained by
relating the
eigenvalues $\lambda_1(M_n)$ to combinatorial properties of the Cayley graphs of
finite groups of deck transformations $\Gamma_n$.  Similar results are due to Sunada~\cite{MR782558}.

In all cases, the method to build a tower of covers satisfying (b)
rests on choosing an {\it amenable\/} group $G$ for the construction~(\ref{setup}). Our main result is that it is possible to obtain such a
tower when $G$ is the free group on two generators.
In the statement, $H^{(2)}$ denotes the subgroup of the discrete group
$H$ generated by the squares of its elements.

\begin{thmm}
  Let $M$ be a closed Riemannian manifold, whose fundamental group
  admits a virtual\footnote{A virtual homomorphism is a homomorphism of a finite-index subgroup.} surjective homomorphism onto
  the free group
   of rank 2.  Taking the nested sequence of
  subgroups in \textup{(}\ref{setup}\textup{)}
  to be the sequence of iterated squares in the free group
\begin{equation*}
\label{squares}\tag{$**$}
 G_0=\F_2,\quad G_1 = \F_2^{(2)}, \quad G_2 =(\mathbb{F}_2^{(2)})_{\strut}^{(2)},
       \quad G_3 =((\mathbb{F}_2^{(2)})_{\strut}^{(2)})_{\strut}^{(2)},\quad \cdots
\end{equation*}
  we obtain a tower of covers of
  $M$ for which $\lambda_1(M_n)\to 0$ as $n\to\infty$.   This tower is not
  obtainable up to uniform quasi-isometry \textup{(}or even uniform
  coarse equivalence\textup{)} by the construction
  \textup{(}\ref{setup}\textup{)} with an amenable $G$.
\end{thmm}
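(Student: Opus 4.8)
The plan is to prove the two assertions of the theorem separately, as they rest on quite different circles of ideas. For the first assertion---that $\lambda_1(M_n)\to 0$ along the tower defined by the iterated squares $(\ref{squares})$---I would reduce, via the standard Brooks--Burger dictionary between the bottom of the spectrum of $M_n$ and the Cheeger/expansion constant of the Cayley graph of $\Gamma_n=\F_2/G_n$, to a purely combinatorial statement: the Cayley graphs of the finite groups $\Gamma_n$ (with respect to the images of the two free generators) do \emph{not} form an expander family, i.e.\ their Cheeger constants tend to $0$. Concretely, since $G_1=\F_2^{(2)}$ has index $4$ with $\Gamma_1\cong(\Z/2)^2$, and each successive $G_{n+1}=G_n^{(2)}$ is the squares-subgroup of $G_n$, one sees inductively that $\Gamma_n$ is a finite $2$-group that is an iterated extension with elementary-abelian quotients; the key point is that $\Gamma_{n+1}$ surjects onto $\Gamma_n$, and the kernels $G_n/G_{n+1}$ are elementary abelian $2$-groups whose rank grows. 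I would exhibit explicit Følner-type subsets in $\Gamma_n$---pulling back a half-space in the top elementary abelian quotient $\Gamma_{n}/\Gamma_{n-1}$ through the surjection $\Gamma_n \twoheadrightarrow \Gamma_n/[\Gamma_n,\Gamma_n]\Gamma_n^2$---to show the isoperimetric ratio is $O(1/n)$ or similar, hence $h(\mathrm{Cay}(\Gamma_n))\to 0$, and then invoke Buser's inequality (or the Brooks comparison) to conclude $\lambda_1(M_n)\to 0$. The handling of the ``virtual'' hypothesis is routine: a finite-index subgroup surjecting onto $\F_2$ passes to a finite cover of $M$, and a tower over that cover is a tower over $M$ after finitely many initial terms, so the eigenvalue asymptotics are unaffected.

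For the second assertion---non-obtainability by $(\ref{setup})$ with $G$ amenable---the plan is a coarse-geometric rigidity argument. Any tower from an amenable $G$ has the property that the disjoint union (or ``box space'') of the Cayley graphs $\mathrm{Cay}(\Gamma_n)$ coarsely embeds into Hilbert space; more precisely, a box space of a residually finite \emph{amenable} group has Property A, equivalently is coarsely amenable. By contrast, the disjoint union of the $\mathrm{Cay}(\Gamma_n)$ arising from the iterated-squares construction over $\F_2$ is what I would call \emph{coarsely non-amenable}---indeed I expect the paper establishes (this is presumably the technical heart referenced by the title) that this particular box space of $\F_2$ does \emph{not} have Property A, even though, by the first part, it is not an expander. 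The logical structure is then: if the iterated-squares tower were obtainable up to uniform coarse equivalence by $(\ref{setup})$ with amenable $G$, the corresponding box spaces would be uniformly coarsely equivalent; Property A is a coarse invariant; the amenable box space has Property A; hence the iterated-squares box space would have Property A---contradicting coarse non-amenability. One must be careful that ``obtainable up to uniform coarse equivalence by $(\ref{setup})$'' is interpreted as: there are towers $\{M_n\}$ and $\{M_n'\}$ (the latter from amenable $G$) with $M_n$ uniformly coarsely equivalent to $M_n'$; passing to universal covers and quotienting, this yields the uniform coarse equivalence of the two families of Cayley graphs, which is exactly what is needed.

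The main obstacle I anticipate is proving that the iterated-squares box space of $\F_2$ fails Property A. This is genuinely delicate because the obvious obstruction to Property A for box spaces---being an expander---is precisely what the first half of the theorem rules out, so one needs a subtler argument. I would look for a ``coarse'' version of non-amenability: show that $\F_2$, filtered by the subgroups $(\ref{squares})$, has some relative Property~$(\tau)$-type behavior \emph{failing} in a way detected by Property A, perhaps by finding, inside each $\mathrm{Cay}(\Gamma_n)$, quasi-isometrically embedded copies of finite pieces of larger and larger expanders, or by a direct argument that the $\ell_1$-geometry of these graphs is incompatible with the $\ell_1$-characterization of Property A. A natural strategy is to use the fact that the squares-subgroup construction, iterated, produces groups $\Gamma_n$ in which $\F_2$-words of bounded length already ``spread out'' with positive rate---a quantitative non-amenability that survives the finite quotient up to the relevant scale---and to leverage a characterization of Property A for box spaces in terms of almost-invariant vectors localized at all scales, showing the required almost-invariance fails at intermediate scales. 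Compared to this, the amenable-side input (amenable box spaces have Property A) is classical, and the spectral translation in the first part is standard, so the weight of the argument sits squarely on the coarse non-amenability of this one carefully chosen box space.
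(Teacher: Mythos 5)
Your overall architecture coincides with the paper's: reduce the eigenvalue statement to $h(\Gamma_n)\to 0$ via Brooks, and rule out amenable towers by showing that the coarse union of the $\Gamma_n$ fails Property A while any uniform coarse equivalence would transport Property A over from the amenable side. But both key technical steps have genuine gaps. For the first part, the explicit cut you propose does not work: pulling back a half-space through $\Gamma_n\twoheadrightarrow\Gamma_n/[\Gamma_n,\Gamma_n]\Gamma_n^2\cong(\Z/2)^2$ separates the Cayley graph along \emph{all} edges labelled by one of the two generators, so $\#E(A,B)$ is comparable to $\#V(\Gamma_n)$ while $\min\{\#A,\#B\}=\#V(\Gamma_n)/2$, and the isoperimetric ratio stays bounded below by a constant rather than decaying. (You have also conflated the top layer $G_{n-1}/G_n$ of the series, which is a normal subgroup of $\Gamma_n$, with the bottom quotient $\Gamma_n/[\Gamma_n,\Gamma_n]\Gamma_n^2\cong\Gamma_1$.) The missing idea is to cut inside the top layer of the \emph{deck group} rather than in a group quotient: $\Gamma_{n+1}$ is the $\Z/2$-homology cover of the graph $\Gamma_n$, with deck group $G_n/G_{n+1}\cong\oplus_1^r\Z/2$ where $r=\rk\pi_1(\Gamma_n)=\#V(\Gamma_n)+1$; partitioning the $2^r$ sheets according to a single $\Z/2$-coordinate (equivalently, cutting along one non-tree edge), only the $2^r$ lifts of that one edge cross, while each side has $2^{r-1}\#V(\Gamma_n)$ vertices, giving $h(\Gamma_{n+1})\leqslant 2/\#V(\Gamma_n)\to 0$. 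This lemma, not a Følner-set computation in the bottom quotient, is the combinatorial heart of the first assertion.

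For the second part you have the correct logical skeleton, but you misdiagnose---and then leave unproved---the step you yourself call the heart of the matter. The coarse non-amenability of the box space of $\F_2$ along the iterated squares is not a delicate new fact requiring embedded expander pieces or $\ell_1$-geometric arguments; it is an instance of the standard result (Roe, Prop.~11.39) that the box space of a finitely generated residually finite group has Property A if and only if the group is amenable. The ``only if'' direction has nothing to do with expansion: given Property A functions on the quotients, for $n$ large enough that $R$-balls in $\Gamma_n$ are isometric to $R$-balls in $G$ (this is where the trivial intersection of the $G_n$, guaranteed by Levi's theorem, must be checked), one lifts and averages them to Reiter functions on $G$, contradicting non-amenability of $\F_2$. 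So your worry that the non-expander conclusion of the first half undercuts the obstruction in the second half is unfounded---the two halves are independent---but as written your proposal asserts rather than establishes the coarse non-amenability it relies on, and the speculative strategies you sketch for it are not the ones that work.
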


\noindent
Observe here that each $G_n$
is normal, even characteristic, in $\F_2$.

The hypothesis of the theorem means that the fundamental group is
{\it large\/} (the terminology is due to Gromov~\cite{gromov}). It
applies to many hyperbolic manifolds~\cite{l}, in particular, to a
closed orientable surface of genus at least two -- the fundamental
group of such a manifold surjects onto $\F_2$.

We conclude the introduction by remarking that in more modern
terminology the classical problems above concerning the construction
(\ref{setup}) can be rephrased in terms of {\it Property\/ $\tau$}:
(a) asks for $G$ to have Property $\tau$ with respect to the family of
subgroups $(G_n)_{n\geqslant 0}$, whereas (b) asks, after perhaps passing to
a subsequence, for $G$ to {\it not\/} have Property $\tau$ with
respect to the $(G_n)_{n\geqslant 0}$.  This is explained in the work of
Burger and Brooks cited above.  Thus, the first assertion in the
theorem is essentially equivalent to the assertion that $\F_2$ does
not have Property $\tau$ with respect to the subgroups appearing in
(\ref{squares}).  For the definition and relevant facts about Property
$\tau$ see \cite{lubotzky}.

\section{Eigenvalues}

A {\it graph\/} is a collection of {\it vertices\/} and {\it edges\/}.
With a small number of exceptions, we permit neither multiple edges
nor loops, so that an edge is uniquely determined by its incident
vertices.  Our graphs are unoriented.
The {\it Cheeger constant\/} of a finite graph $\Gamma$ is
\begin{equation}
\label{cheeger}
  h(\Gamma) = \inf \frac{\# E(A,B)}{\min\{\, \# A, \# B \,\}},
\end{equation}
where the infimum is taken over all decompositions of the vertex set
of $\Gamma$ as a disjoint union $A\sqcup B$ and where, for such a
decomposition, $E(A,B)$ denotes the set of edges with one incident
vertex in $A$ and the other in $B$.

We shall make use of the following result of Brooks which, in the
notation of (\ref{setup}), relates the eigenvalues of the $M_n$ to the
Cheeger constants of the Cayley graphs of the $\Gamma_n$
computed  with respect to the canonical images of generators of $G$ and denoted, by an abuse of notation, again by $\Gamma_n$.
  We shall require only the forward implication, which is the content of
\cite[Lemma~1]{MR840402}.

\begin{thmm}[Brooks]
  In the notation of
  \textup{(}\ref{setup}\textup{)} we have
  $h(\Gamma_n)\to 0$  precisely when $\lambda_1(M_n)\to 0$.  \qed
\end{thmm}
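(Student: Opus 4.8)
\section*{Proof proposal}

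The plan is to prove something a little stronger than the stated equivalence, namely a \emph{uniform} two-sided comparison between $\lambda_1(M_n)$ and the first nonzero eigenvalue $\mu_1(\Gamma_n)$ of the combinatorial Laplacian of the Cayley graph $\Gamma_n$, and then to dispatch the combinatorial endpoint by the discrete Cheeger inequality. Since the graphs $\Gamma_n$ have a fixed degree $k$ (at most twice the number of chosen generators of $G$), independent of $n$, the discrete Cheeger inequality bounds $\mu_1(\Gamma_n)$ above and below by universal functions of $h(\Gamma_n)$ and $k$; hence $h(\Gamma_n)\to 0$ if and only if $\mu_1(\Gamma_n)\to 0$. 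It therefore suffices to produce constants $0<c\le C$, depending only on $M$, such that for all large $n$
\[
  c\,\mu_1(\Gamma_n)\ \le\ \lambda_1(M_n)\ \le\ C\,\mu_1(\Gamma_n).
\]
One preliminary reduction: I would replace the given generators of $G$ by a geometrically convenient finite generating set. This is harmless, because any two finite generating sets of $G$ are related by finitely many relations valid in $G$, hence in every quotient $\Gamma_n$; so the two families of Cayley graphs are bi-Lipschitz with $n$-independent constants, and both $h$ and $\mu_1$ change only by bounded factors.

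For the geometry: $M_n\to M$ is a finite regular cover with deck group $\Gamma_n$ acting simply transitively on a tiling of $M_n$ by isometric copies $\{D_\gamma\}_{\gamma\in\Gamma_n}$ of a single compact connected fundamental domain $D$ (a Dirichlet domain for $\widetilde M\to M$), with $D_\gamma$ and $D_{\gamma'}$ sharing a codimension-one wall exactly when $\gamma^{-1}\gamma'$ lies in the fixed finite symmetric set $S$ of side-pairing transformations; the tile-adjacency graph is then precisely $\mathrm{Cay}(\Gamma_n,S)$, and this is the generating set I use. The point of the uniform tiling is that $D$, and each connected two-tile union $D_\gamma\cup D_{\gamma'}$ of adjacent tiles, is isometric to one of finitely many fixed pieces, independent of $n$ and $\gamma$; in particular there is a single Poincar\'e constant $C_P$ with $\int_{D}|g-\bar g|^2\le C_P\int_D|\nabla g|^2$ for all $g\in H^1(D)$ (where $\bar g$ is the average of $g$), and the analogous bound on each two-tile union. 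Now the two comparisons. \emph{Upper bound} (this is the forward implication the paper actually uses): given $\phi\colon\Gamma_n\to\mathbb R$ with $\sum_\gamma\phi(\gamma)=0$, build a standard test function $f_\phi\in H^1(M_n)$ equal to $\phi(\gamma)$ on a fixed sub-core of $D_\gamma$ and interpolated across a fixed collar, using the $\Gamma_n$-symmetry of the tiling to arrange $\int_{M_n}f_\phi=0$; then $\int_{M_n}|\nabla f_\phi|^2\le C_1\sum_{\gamma^{-1}\gamma'\in S}|\phi(\gamma)-\phi(\gamma')|^2$ and $\int_{M_n}f_\phi^2\ge c_1\sum_\gamma\phi(\gamma)^2$, and the variational characterisation of $\lambda_1$ gives $\lambda_1(M_n)\le (C_1/c_1)\mu_1(\Gamma_n)$. \emph{Lower bound}: let $f$ be a $\lambda_1(M_n)$-eigenfunction, $\int_{M_n}f=0$, and let $\phi(\gamma)$ be the average of $f$ over $D_\gamma$; then $\sum_\gamma\phi(\gamma)=0$ since all tiles have equal volume $v:=\mathrm{vol}(D)$. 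The difference of averages over two adjacent tiles is controlled by $\int_{D_\gamma\cup D_{\gamma'}}|\nabla f|$ via the two-tile Poincar\'e inequality, so Cauchy--Schwarz and summing (each tile meeting $\le\#S$ neighbours) yield $\sum_{\gamma^{-1}\gamma'\in S}|\phi(\gamma)-\phi(\gamma')|^2\le C_2\int_{M_n}|\nabla f|^2=C_2\,\lambda_1(M_n)\int_{M_n}f^2$. For the mass bound I argue by dichotomy: since
\[
  \int_{M_n}f^2-v\sum_\gamma\phi(\gamma)^2=\sum_\gamma\int_{D_\gamma}|f-\phi(\gamma)|^2\le C_P\sum_\gamma\int_{D_\gamma}|\nabla f|^2=C_P\,\lambda_1(M_n)\int_{M_n}f^2,
\]
\emph{either} $\lambda_1(M_n)\ge 1/(2C_P)$ \emph{or} $\sum_\gamma\phi(\gamma)^2\ge \tfrac1{2v}\int_{M_n}f^2$, and in the second case the variational characterisation of $\mu_1$ gives $\mu_1(\Gamma_n)\le 2vC_2\,\lambda_1(M_n)$.

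Assembling: if $h(\Gamma_n)\to 0$ then $\mu_1(\Gamma_n)\to 0$, so by the upper bound $\lambda_1(M_n)\to 0$; conversely, if $\lambda_1(M_n)\to 0$ then eventually $\lambda_1(M_n)<1/(2C_P)$, so the second alternative of the lower bound applies and $\mu_1(\Gamma_n)\to 0$, whence $h(\Gamma_n)\to 0$ by the discrete Cheeger inequality. I expect the main obstacle to be bookkeeping the uniformity: every constant ($C_P$, the collar profile, the degree $k$, the test-function estimates) must be manifestly independent of $n$, which is exactly what the tiling of each $M_n$ by isometric copies of the one domain $D$ provides, and the only step that genuinely invokes the eigenfunction equation rather than soft geometry is the mass bound $\sum_\gamma\phi(\gamma)^2\gtrsim\int_{M_n}f^2$, handled by the Poincar\'e dichotomy above. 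As a remark, the same equivalence can be extracted from Kanai-type rough-isometry invariance of the bottom of the spectrum for spaces of bounded geometry, applied uniformly along the tower; the argument sketched here is essentially a self-contained proof of that invariance specialised to the present situation.
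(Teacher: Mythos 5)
The paper itself does not prove this statement: it is quoted as Brooks' theorem and the one implication actually used ($h(\Gamma_n)\to 0\Rightarrow\lambda_1(M_n)\to 0$) is cited from Brooks' Lemma~1, so there is no internal proof to compare against line by line. Your proposal is a correct, essentially self-contained argument, and it follows a route different from Brooks' original one: Brooks compares the isoperimetric (Cheeger) constants of the manifolds $M_n$ with the combinatorial Cheeger constants $h(\Gamma_n)$ and then passes to eigenvalues via Cheeger's inequality and Buser's inequality (the latter uniform along the tower because every $M_n$ is locally isometric to $M$), whereas you compare Rayleigh quotients directly, transferring test functions through the tiling of $M_n$ by copies of one fixed fundamental domain --- a Kanai/Burger-style discretization --- and invoke Cheeger-type inequalities only on the graph side, where the uniformly bounded degree makes them uniform in $n$. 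What your approach buys is that it avoids Buser's inequality (and hence any curvature input beyond compactness of $M$) at the price of the discrete Cheeger inequality and the bookkeeping of uniform constants, which your tiling argument handles correctly; your upper bound is exactly the forward implication the paper needs, and the dichotomy treatment of the mass term $\sum_\gamma\phi(\gamma)^2$ in the lower bound is sound.

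Two small points to tighten, neither of which affects the structure. First, a two-tile union $D_\gamma\cup D_{\gamma'}$ inside $M_n$ need not be isometric to one of finitely many model pieces: the injectivity radius of $M_n$ does not tend to infinity, since the subgroups $\pi_1(M_n)$ intersect in the kernel of $\pi_1(M)\twoheadrightarrow G$, which may be nontrivial, so the union can be a non-injectively immersed image. The standard fix is to pull $f$ back to the universal cover and apply the Poincar\'e inequality on the finitely many embedded compact unions $\widetilde D_e\cup\widetilde D_s$, $s$ ranging over the side pairings; each closed tile maps isometrically (up to a measure-zero boundary) onto its image, so all averages, $L^2$ norms and energies you use are unchanged, and your summation with multiplicity at most $\#S$ already absorbs the overcounting. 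Second, you should either choose $D$ with controlled boundary regularity (e.g.\ a connected union of lifted simplices of a fixed triangulation of $M$) or say a word about why the Poincar\'e inequality holds on the chosen Dirichlet domain; similarly, for small $n$ the Cayley graphs may have loops or repeated generators, but this is irrelevant for the asymptotic statement.
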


Thus, the first statement in theorem of the introduction is reduced to
the following:

\begin{prop}\label{prop:sq}
  Let $G=\F_2$ be the free group of rank 2.  Consider the tower of
  iterated squares \textup{(}\ref{squares}\textup{)} and the
  corresponding quotients:
\begin{equation*}
    \Gamma_0 = \{\, 1 \,\} \leftarrow \Gamma_1 = \F_2/\F_2^{(2)}
      \leftarrow \Gamma_2 = \F_2/(\F_2^{(2)})_{\strut}^{(2)} \leftarrow \cdots
  \end{equation*}
  Abusing notation, view each $\Gamma_n$ as a Cayley graph
  with respect to the images of the standard free generators of $\F_2$. Then we
  have $h(\Gamma_n)\to 0$ as $n\to\infty$.
\end{prop}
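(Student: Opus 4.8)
The plan is to exploit the abelianization of $\F_2$ to exhibit, for each $n$, a generator-preserving surjection $\Gamma_n\twoheadrightarrow(\Z/2^n\Z)^2$ onto a discrete torus, and then to pull back the obvious thin bisection of the torus; since the torus grows but is easy to cut in half, this will force $h(\Gamma_n)\to 0$.

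First I would compute the image of $G_n$ under the abelianization homomorphism $\pi\colon\F_2\to\Z^2$. Since $G_{n+1}=G_n^{(2)}$ is generated by the squares of the elements of $G_n$ and $\pi$ is a homomorphism, $\pi(G_{n+1})$ is generated by $\{\,2\pi(g):g\in G_n\,\}$; and because $2H$ is a subgroup whenever $H\leqslant\Z^2$ is, this set is already the subgroup $2\,\pi(G_n)$. As $\pi(G_0)=\pi(\F_2)=\Z^2$, induction gives $\pi(G_n)=2^n\Z^2$ for every $n$. Hence the composite $\F_2\xrightarrow{\,\pi\,}\Z^2\twoheadrightarrow\Z^2/2^n\Z^2=(\Z/2^n\Z)^2$ annihilates $G_n$, so it factors through a surjective homomorphism $\phi_n\colon\Gamma_n\twoheadrightarrow(\Z/2^n\Z)^2$ that sends the images of the standard generators $a,b$ of $\F_2$ to the standard generators of the torus.

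Next I would transport bisections along $\phi_n$. Fix $n\geqslant 2$; then $\phi_n(a^{\pm1}),\phi_n(b^{\pm1})$ are four distinct nontrivial elements, so $\phi_n$ is a morphism from the Cayley graph $\Gamma_n$ onto the Cayley graph of $(\Z/2^n\Z)^2$ (the standard $4$-regular grid on the torus): writing $k_n=|\ker\phi_n|=|\Gamma_n|/2^{2n}$, every vertex has exactly $k_n$ preimages and, for each generator $s$, every edge $\{u,u+\phi_n(s)\}$ has exactly $k_n$ preimages. Hence any bisection $V\bigl((\Z/2^n\Z)^2\bigr)=A'\sqcup B'$ pulls back to $V(\Gamma_n)=\phi_n^{-1}(A')\sqcup\phi_n^{-1}(B')$ with both cardinalities and the number of crossing edges multiplied by $k_n$, so the ratio $\#E(A,B)/\min\{\#A,\#B\}$ is unchanged; thus $h(\Gamma_n)\leqslant h\bigl((\Z/2^n\Z)^2\bigr)$. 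Finally, slicing the torus in half along a coordinate, $A'=\{0,\dots,2^{n-1}-1\}\times(\Z/2^n\Z)$ and $B'$ its complement, one has $\#A'=\#B'=2^{2n-1}$ while only $2\cdot 2^n$ edges cross in all, so $h\bigl((\Z/2^n\Z)^2\bigr)\leqslant 2\cdot 2^n/2^{2n-1}=2^{\,2-n}$. Combining, $h(\Gamma_n)\leqslant 2^{\,2-n}\to 0$; and since $|\Gamma_n|\geqslant 2^{2n}$ the bisections involved are genuinely nontrivial, so this really is a statement about Cheeger constants going to zero.

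I do not anticipate a serious obstacle; the one step carrying actual content is the identity $\pi(G_n)=2^n\Z^2$. It says that although the ranks of the $G_n$ — and hence the orders $|\Gamma_n|$ — grow doubly exponentially, the abelianized images stay as simple as possible, namely the lattices $2^n\Z^2$, and it is exactly this collapse that forces each $\Gamma_n$ onto a large torus of small Cheeger constant. After that, the two remaining points — that a quotient of Cayley graphs with matching generating sets cannot increase the Cheeger constant, and that a coordinate bisection of $(\Z/2^n\Z)^2$ is thin — are routine.
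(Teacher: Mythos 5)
Your proof is correct, but it takes a genuinely different route from the paper's. The paper identifies $\Gamma_n$ as the $n$-th iterated $\Z/2$-homology cover of the figure-eight graph and proves a general lemma: the $\Z/2$-homology cover $\widetilde\Sigma$ of a finite graph $\Sigma$ satisfies $h(\widetilde\Sigma)\leqslant 2/\#V(\Sigma)$, the bisection being given by the last $\Z/2$-coordinate of the deck group $\oplus_1^r\Z/2$. You instead use only the rank-two abelianization: the identity $\pi(G_n)=2^n\Z^2$ (which is right, and is the one step with content) yields a generator-matching epimorphism $\Gamma_n\twoheadrightarrow(\Z/2^n\Z)^2$, monotonicity of the Cheeger constant under such Cayley-graph quotients, and a coordinate slice of the torus. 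Both arguments ultimately cut $\Gamma_n$ along one coordinate of an abelian quotient; the difference is which abelian quotient. The paper uses the full mod-$2$ homology of the previous stage, whose rank is $\#V_{n-1}+1$, so it gets $h(\Gamma_n)\lesssim 1/\#V_{n-1}$ with $\#V_n$ growing like an iterated exponential -- this sharp rate is the subject of the remark following the proof. Your torus only records the rank-two abelianization of $\F_2$, so you get the much weaker bound $h(\Gamma_n)\leqslant 2^{2-n}$; this is entirely sufficient for the qualitative statement $h(\Gamma_n)\to 0$, and your argument is more elementary in that it avoids covering-space theory, but it cannot recover the paper's decay rate. Your edge- and vertex-fiber counts for the quotient map are fine for $n\geqslant 2$ (where the generators and their inverses are four distinct nontrivial elements of $(\Z/2^n\Z)^2$, so both Cayley graphs are simple and $4$-regular), and small $n$ is irrelevant to a limit statement.
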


In preparation for the proof we recall the construction of the {\it
$\Z/2$-homology cover\/} of a finite graph $\Sigma$.  Fix a maximal
tree $T$ in $\Sigma$ and let $e_1,\dots,e_r$ be the edges of
$\Sigma$ {\it not\/} in $T$.  The vertex and edge sets of the
$\Z/2$-homology cover $\widetilde\Sigma$ are
\begin{equation*}
  \widetilde V = V \times \oplus_1^r \Z/2, \quad
  \widetilde E = E \times \oplus_1^r \Z/2,
\end{equation*}
where $E$ and $V$ denote the vertex and edge sets of $\Sigma$. Let
$e\in E$ and let $v$, $w\in V$ be the vertices incident with $e$.
Consider the edge $(e,\alpha)\in\widetilde E$. Incidence is defined
in two cases:
\begin{equation*}
  \text{$(e,\alpha)$ contains} \begin{cases}
          \text{$(v,\alpha)$ and $(w,\alpha)$},
                &\text{when $e$ belongs to the maximal tree $T$} \\
          \text{$(v,\alpha)$ and $(w,\alpha+\overline{e}_j)$},
                &\text{when $e=e_j$, for some $1\leqslant j \leqslant r$.}
                          \end{cases}
\end{equation*}
Here $\overline{e}_j=(\dots,1,\dots)$ is the standard basis vector
with a single $1$ in the $j$-the position and $0$'s elsewhere.
Strictly speaking, when defining incidence it is necessary to {\it
  direct\/} the edges $e_j$. It is quickly verified however that,
while the edges are parameterized in a different manner, the
underlying {\it undirected\/} graph is independent of the choice. We
shall not dwell on this aspect.

\begin{rem}
  The construction given here of the $\Z/2$-homology cover is a
  special case of the classical construction of a finite sheeted regular cover of
  $\Sigma$ corresponding to
  a given normal subgroup of finite index in  $\pi_1(\Sigma)$, see, for example,~\cite[Ch. 2]{stillwell}.
  Indeed, with $e_1,\dots,e_r$ as above, and after directing each
  $e_j$, we identify
\begin{equation*}
 \pi_1(\Sigma) \cong \F_r = \langle\; e_1,\dots,e_r \;\rangle.
\end{equation*}
Then the cover corresponding to the kernel of the epimorphism
\begin{equation*}
      \pi_1(\Sigma)\cong \F_r \twoheadrightarrow {\F_r}/{\F_r^{(2)}}\cong
\oplus_1^r \Z/2\quad \hbox{ defined by } \quad
e_j\mapsto\overline{e}_j,
 \end{equation*}
is the $\Z/2$-homology cover.
\end{rem}

\begin{lem}\label{lem:ch}
  Let $\Sigma$ be a finite graph, with vertex set $V$; let
  $\widetilde\Sigma$ be its
  $\Z/2$-homology cover.  We have
  \begin{equation*}
    h(\widetilde\Sigma)\leqslant \frac{2}{\# V}.
 \end{equation*}
\end{lem}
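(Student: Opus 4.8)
The plan is to unwind the definition of $\widetilde\Sigma$ and exhibit, directly, a decomposition of its vertex set witnessing the stated bound. The deck group of the covering $\widetilde\Sigma\to\Sigma$ is $\oplus_1^r\Z/2$, acting by translation on the second factor of $\widetilde V=V\times\oplus_1^r\Z/2$. The idea is to cut $\widetilde\Sigma$ along the ``coordinate hyperplane'' fixed by an index-two subgroup of the deck group: such a cut will sever only the lifts of a \emph{single} non-tree edge of $\Sigma$, while splitting the vertex set exactly in half.

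Concretely, I would assume $r\geqslant 1$ (this holds whenever $\Sigma$ contains a cycle, which is the only case relevant to the application), fix the non-tree edge $e_1$, write $\alpha=(\alpha_1,\dots,\alpha_r)$ for a typical element of $\oplus_1^r\Z/2$, and set
\[
  A=\{\,(v,\alpha)\in\widetilde V : \alpha_1=0\,\},\qquad
  B=\{\,(v,\alpha)\in\widetilde V : \alpha_1=1\,\}.
\]
Then $\widetilde V=A\sqcup B$, and since the condition on $\alpha_1$ halves the group $\oplus_1^r\Z/2$ we get $\#A=\#B=\#V\cdot 2^{r-1}$, hence $\min\{\#A,\#B\}=\#V\cdot 2^{r-1}$.

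Next I would count $E(A,B)$ using the incidence rules. A lift $(e,\alpha)$ of a tree edge joins $(v,\alpha)$ to $(w,\alpha)$, which share the value of the first coordinate and so lie on the same side of the cut. A lift $(e_j,\alpha)$ of a non-tree edge joins $(v,\alpha)$ to $(w,\alpha+\overline{e}_j)$, and these lie on opposite sides precisely when $j=1$. Therefore every edge of $E(A,B)$ has the form $(e_1,\alpha)$, so $\#E(A,B)\leqslant \#(\oplus_1^r\Z/2)=2^r$, and consequently
\[
  h(\widetilde\Sigma)\;\leqslant\;\frac{\#E(A,B)}{\min\{\#A,\#B\}}\;\leqslant\;\frac{2^r}{\#V\cdot 2^{r-1}}\;=\;\frac{2}{\#V}.
\]

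I do not anticipate a real obstacle: the argument is a direct computation from the definitions. The only points needing a little care are (i) verifying that $A$ and $B$ genuinely split $\widetilde V$ into two equal halves --- this is where $r\geqslant 1$ is used, and it is precisely why it is convenient that the Cheeger constant carries a $\min$ in its denominator; and (ii) the innocuous degenerate configurations, e.g.\ if $e_1$ is a loop then the lifts $(e_1,\alpha)$ are identified in pairs, which only decreases $\#E(A,B)$ and hence leaves the inequality intact.
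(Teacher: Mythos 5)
Your proof is correct and is essentially the paper's own argument: the paper makes the identical cut along a single non-tree edge (it uses the last coordinate $\alpha_r$ and the edge $e_r$ where you use $\alpha_1$ and $e_1$), obtaining the same counts $\#A=\#B=2^{r-1}\,\#V$ and $\#E(A,B)\leqslant 2^r$. Your extra remarks on $r\geqslant 1$ and on loops are harmless refinements of the same computation.
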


\begin{proof}
  We employ the notation introduced above for $\widetilde\Sigma$.
  We shall exhibit a
  decomposition of the vertex set $\widetilde V = A \sqcup B$ for which the
  quotient in  (\ref{cheeger}) is bounded by $2/\# V$.  Let
  \begin{equation*}
    A = \{\, (v,\alpha)\in \widetilde V: \alpha=(*,\dots,*,0) \;\}, \quad
    B = \{\, (w,\beta)\in \widetilde V : \beta=(*,\dots,*,1) \;\},
  \end{equation*}
each of which contains exactly  $2^{r-1}\, \# V$ vertices. The edges
in $\widetilde E$ with one vertex in $A$ and the other in $B$ are
exactly those of the form $(e_r,\gamma)$, for arbitrary
$\gamma\in\oplus_1^r \Z/2$; thus $E(A,B)$ contains exactly $2^r$
edges.
\end{proof}

\begin{proof}[Proof of Proposition~\ref{prop:sq}]
  The Cayley graph $\Gamma_n$ is the $n$-th iterated $\Z/2$-homology
  cover of the ``figure $8$''.  Since the number of vertices in
  $\Gamma_n$ tends to infinity, the result follows from the previous
  lemma.
\end{proof}

\begin{rem}
  A more detailed analysis gives information on the {\it rate\/} of
  the convergence $h(\Gamma_n)\to 0$.  Indeed, let $V_n$ be the set
  of vertices and $E_n$ the set of edges of (the Cayley graph of)
  $\Gamma_n$.  We have
  \begin{equation*}
    \frac{\# V_{n+1}}{\# V_{n}} =    \frac{\# E_{n+1}}{\# E_{n}}
         = 2^{\rk \pi_1(\Gamma_{n})}.
  \end{equation*}
  Now, the rank
  of the fundamental group $\pi_1(\Gamma_{n})$ is
  the number of edges
  {\it not\/} belonging to a fixed maximal tree in $\Gamma_{n}$.
  Since $\# E_n = 2 \cdot\# V_n$, the rank of $\pi_1(\Gamma_{n})$ is
  $\# V_n+1$.
  Thus, we get the recursive formula
\begin{equation*}
  \# V_{n+1} = \# V_n \cdot 2^{\# V_n +1}.
\end{equation*}
In particular, $\# V_n$ grows faster than an iterated exponential and,
according to the previous lemma, the Cheeger constant
$h(\Gamma_{n+1})$ decays as the reciprocal of $\# V_n$.
\end{rem}

\section{Non uniform coarse equivalence}

We shall now show that the tower constructed in the previous section
cannot be duplicated beginning with an amenable group in
(\ref{setup}), thus completing the proof of the theorem in the
introduction.

 Two families $(X_n)_{n\geqslant 0}$ and $(Y_n)_{n\geqslant 0}$ of metric spaces are {\it
  uniformly   quasi-isometric\/} if there exist functions $f_n:X_n\to Y_n$ and
constants $C\geqslant 1$ and $D\geqslant 0$ such that for all
$x,y\in X_n$ and $z\in Y_n$, we have
\begin{itemize}
  \item $C^{-1} d(x,y) - D \leqslant d(f_n(x),f_n(y)) \leqslant C d(x,y) + D$,
    \item $d(z, f_n(X_n))\leqslant D$.
\end{itemize}
The families $(X_n)_{n\geqslant 0}$ and $(Y_n)_{n\geqslant 0}$ are {\it uniformly coarsely equivalent\/}
if there exist functions $f_n:X_n\to Y_n$ with the following two
properties:
\begin{itemize}
  \item $\forall A\; \exists B$ such that
           $\forall n\;\forall x,y\in X_n$ we have
           $d(x,y)\leqslant A\ \Rightarrow\ d(f_n(x),f_n(y))\leqslant B$,
  \item $\forall A\; \exists B$ such that
           $\forall n\;\forall x,y\in X_n$ we have
           $d(x,y)\geqslant B\ \Rightarrow\ d(f_n(x),f_n(y))\geqslant A$.
\end{itemize}
If two families are uniformly quasi-isometric then they are uniformly
coarsely equivalent.  Observe that these notions apply to individual
spaces, which we regard as trivial families containing a single space.
We say, for example, two spaces are coarsely equivalent.

\begin{prop}[The Uniform \v{S}varc-Milnor Lem\-ma]\label{prop:uqi}
  Continue with the notation of
  \textup{(}\ref{setup}\textup{)}.  Equip each $\Gamma_n$ with the
  word metric associated to a fixed finite
  generating set for $G$; equip each $M_n$ with the path metric
  associated to its Riemannian structure.  The families $(\Gamma_n)_{n\geqslant 0}$ and
  $(M_n)_{n\geqslant 0}$ are uniformly quasi-isometric.
\end{prop}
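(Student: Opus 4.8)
The plan is to run the classical \v Svarc--Milnor argument just once, upstairs in the common universal cover, and to check that every constant it produces depends only on $M$, not on $n$. Let $\widetilde M$ be the Riemannian universal cover of $M$; being complete it is a proper geodesic metric space, and $\pi_1(M)$ acts on it properly discontinuously, cocompactly, and by isometries. Fix a basepoint $\widetilde x_0\in\widetilde M$, and for each element $s$ of the chosen finite generating set of $G$ fix a lift $\widetilde s\in\pi_1(M)$ of $s$ along $\pi_1(M)\twoheadrightarrow G$. Writing $N_n$ for the kernel of the composite $\pi_1(M)\twoheadrightarrow G\twoheadrightarrow\Gamma_n$, we have $M_n=\widetilde M/N_n$ with deck group $\Gamma_n$; let $\pi_n\colon\widetilde M\to M_n$ be the covering map, set $x_0^{(n)}=\pi_n(\widetilde x_0)$, and define $f_n\colon\Gamma_n\to M_n$ by $f_n(\bar g)=\bar g\cdot x_0^{(n)}$. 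Since $\pi_n$ intertwines the $\pi_1(M)$-action on $\widetilde M$ with the $\Gamma_n$-action on $M_n$, one has $f_n(\bar g)=\pi_n(g\widetilde x_0)$ for every lift $g\in\pi_1(M)$ of $\bar g$.

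First I would prove the Lipschitz upper bound (with no additive error). If $|\bar g^{-1}\bar h|=k$ in the word metric of $\Gamma_n$, write $\bar g^{-1}\bar h$ as a word of length $k$ in the generators, lift it letter by letter using the chosen $\widetilde s$ to a path in $\widetilde M$ starting at $\widetilde x_0$, and project to $M_n$. Because each $\widetilde s$ displaces $\widetilde x_0$ by at most $\mu:=\max_s d_{\widetilde M}(\widetilde x_0,\widetilde s\,\widetilde x_0)$ and $\pi_n$ is distance non-increasing, this yields a path in $M_n$ from $x_0^{(n)}$ to $f_n(\bar g^{-1}\bar h)$ of length at most $\mu k$; applying the isometry $\bar g$ gives $d(f_n(\bar g),f_n(\bar h))\le\mu\,d(\bar g,\bar h)$, and $\mu$ plainly does not depend on $n$. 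Coarse density is just as quick: pick $R\ge\operatorname{diam}(M)$, so that $\pi_1(M)\cdot\overline B(\widetilde x_0,R)=\widetilde M$; then any $y\in M_n$ lifts to some $\widetilde y\in\widetilde M$, which lies within $R$ of an orbit point $g\widetilde x_0$, whence $d_{M_n}(y,f_n(\bar g))\le R$, again uniformly.

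The one step that genuinely uses the geometry of $\widetilde M$ is the lower Lipschitz bound, and this is where the uniformity really lives. Put $S'=\{\,h\in\pi_1(M):d_{\widetilde M}(\widetilde x_0,h\widetilde x_0)\le 3R\,\}$, a finite set by proper discontinuity, and let $L$ bound the word lengths in $G$ of the (finitely many) images of elements of $S'$. Given $\bar k\in\Gamma_n$, take a geodesic of $M_n$ from $x_0^{(n)}$ to $\bar k\cdot x_0^{(n)}$ of length $D_0$, and lift it to a path in $\widetilde M$ from $\widetilde x_0$ to $g\widetilde x_0$ of the \emph{same} length $D_0$, with $g$ a lift of $\bar k$ --- this length preservation under lifting is the crucial point. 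Subdivide the lifted path into at most $D_0/R+1$ arcs of length $\le R$, replace the division points by orbit points $g_i\widetilde x_0$ with $g_0=1$ and $g_N=g$, observe that each $g_i^{-1}g_{i+1}$ has $d_{\widetilde M}(\widetilde x_0,g_i^{-1}g_{i+1}\widetilde x_0)\le 3R$ and hence lies in $S'$, and deduce that the image of $g$ in $G$ is a product of at most $D_0/R+1$ words of length $\le L$, so that $|\bar k|\le L(D_0/R+1)$ in $\Gamma_n$. Applying this with $\bar k=\bar g^{-1}\bar h$ and $D_0=d(f_n(\bar g),f_n(\bar h))$ (legitimate since $\bar g$ acts by an isometry) gives $d(f_n(\bar g),f_n(\bar h))\ge (R/L)\,d(\bar g,\bar h)-R$. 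Taking $C=\max(1,\mu,L/R)$ and $D=R$ then delivers the uniform quasi-isometry. Apart from this lower bound, everything is bookkeeping among three generating sets --- the fixed one for $G$, its image in $\Gamma_n$, and the geometric sets $S'\subset\pi_1(M)$ coming from balls in $\widetilde M$ --- and the essential observation is that the entire estimate can be transported into the single fixed space $\widetilde M$, where the constants $\mu$, $R$, $L$ are manifestly independent of $n$.
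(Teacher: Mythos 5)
Your proof is correct and is essentially the paper's argument carried out in full: the paper simply invokes the \v{S}varc--Milnor Lemma for the actions $\Gamma_n\curvearrowright M_n$ and notes that its constants depend only on the diameter of a fundamental domain, which is uniformly bounded because fundamental domains push forward from a fixed cover of $M$. Your version makes the same uniformity explicit by lifting everything to the single space $\widetilde M$, where the constants $\mu$, $R$, $L$ visibly do not depend on $n$, and it has the small added merit of working directly with the word metric coming from the fixed generators of $G$ rather than the generating set that the \v{S}varc--Milnor Lemma produces.
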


\begin{rem}
  In the situation of (\ref{setup}) the group $G$ is indeed
  finitely generated.  Further, the statement in the proposition is
  independent of the choice of generators for $G$.
\end{rem}

\begin{proof}[Proof of Proposition~\ref{prop:uqi}]
  The result follows from the \v{S}varc-Milnor Lem\-ma \cite[Prop.~I.8.19]{bh},
  observing that the inherent quasi-isometry constants (see the proof of the Lemma) depend
  only on the diameter of a fundamental domain for the action. In
  detail, $\Gamma_n$ is the group of deck transformations of the cover
  $M_n$ of $M$, whereas $G$ is the group of deck transformations
  of the cover corresponding to the kernel of the surjective
  homomorphism $\pi_1(M) \twoheadrightarrow G$.  Further, the image in
  $M_n$ of a bounded fundamental
  domain for the action of $G$ is a
  fundamental domain for the action of $\Gamma_n$, of no greater diameter.
\end{proof}

Thus, the second statement in the theorem of the introduction is
reduced to the following:

\begin{prop}\label{prop:nqe}
  Consider the tower of iterated squares
  \textup{(}\ref{squares}\textup{)} of the free
  group $\F_2$ and the corresponding quotients
  \begin{equation*}
    \Gamma_0 = \{\, 1 \,\} \leftarrow \Gamma_1 = \F_2/\F_2^{(2)}
      \leftarrow \Gamma_2 = \F_2/(\F_2^{(2)})_{\strut}^{(2)} \leftarrow \cdots
  \end{equation*}
Then the family $(\Gamma_n)_{n\geqslant 0}$ is not uniformly coarsely equivalent to any family of
quotients of an amenable group.
\end{prop}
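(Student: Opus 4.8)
The plan is to isolate a coarse invariant of families of finite metric spaces of uniformly bounded geometry which is enjoyed by every family of quotients of an amenable group but fails for $(\Gamma_n)_{n\geq 0}$. Call a family $(X_n)_{n\geq 0}$ of finite metric spaces \emph{uniformly amenable} if for all $R,\varepsilon>0$ there exist $D>0$ and $N$ such that for every $n\geq N$ there is a subset $S\subseteq X_n$ with $\operatorname{diam} S\leq D$ and $\#\bigl(N_R(S)\setminus S\bigr)<\varepsilon\,\#S$, where $N_R(S)=\{\,x : d(x,S)\leq R\,\}$. The clause that $S$ be of \emph{bounded} diameter is what turns this into a genuinely large-scale condition rather than a triviality for finite spaces. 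The argument has three steps: (i) every family of quotients of a (finitely generated) amenable group is uniformly amenable; (ii) the family $(\Gamma_n)$ of the proposition is \emph{not} uniformly amenable; (iii) uniform amenability passes from a family to every family that is uniformly coarsely equivalent to it. Together these prove the proposition: were $(\Gamma_n)$ uniformly coarsely equivalent to a family of quotients of an amenable group, then by (i) and (iii) it would be uniformly amenable, contradicting (ii).

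For step (i), take the construction \textup{(}\ref{setup}\textup{)} with $G$ amenable, write $Q_n=G/G_n$, and fix a word metric on each $Q_n$ (recall $G$ is finitely generated here). Because $(G_n)$ is a descending chain of finite-index subgroups with $\bigcap_n G_n=\{1\}$, every nontrivial element of a fixed ball of $G$ lies outside $G_n$ once $n$ is large, so the quotient maps $G\to Q_n$ are isometries on balls of any prescribed radius for all large $n$; i.e.\ the injectivity radii of the $Q_n$ tend to infinity. Given $R,\varepsilon>0$, amenability of $G$ yields a single finite F\o lner set $F\subseteq G$ with $\#(N_R(F)\setminus F)<\varepsilon\,\#F$ in the word metric; for all large $n$ the map $G\to Q_n$ is an isometry on the ball of radius $\operatorname{diam}(F)+R$, so the image of $F$ in $Q_n$ has diameter $\leq\operatorname{diam}(F)$ and obeys the same F\o lner inequality. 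Hence $(Q_n)$ is uniformly amenable.

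For step (ii), the same injectivity-radius argument applies to $\F_2$ and the iterated squares \textup{(}\ref{squares}\textup{)}: for $n$ large, every ball of radius $D$ in $\Gamma_n=\F_2/G_n$ is isometric to a ball of radius $D$ in the Cayley graph of $\F_2$, which is the $4$-regular tree. In a tree, a finite vertex set $S$ spans a forest, hence with at most $\#S-1$ edges inside it; since the degrees of the vertices of $S$ sum to $4\,\#S$, more than $2\,\#S$ edges run between $S$ and its complement, and, each vertex having degree $4$, this forces $\#\bigl(N_1(S)\setminus S\bigr)\geq\tfrac12\,\#S$ for every finite $S$ in the tree. Consequently, with $R=1$ and $\varepsilon=\tfrac12$: for any candidate $D$, choosing $n$ whose injectivity radius exceeds $D+1$ makes every $S\subseteq\Gamma_n$ with $\operatorname{diam} S\leq D$ satisfy $\#(N_1(S)\setminus S)\geq\tfrac12\,\#S$, so no bounded F\o lner set exists. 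This is where the non-amenability of $\F_2$ is used; note that by Lemma~\ref{lem:ch} and Proposition~\ref{prop:sq} the Cheeger constants $h(\Gamma_n)$ nonetheless vanish, so it is precisely the boundedness requirement on $S$ that is violated.

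Step (iii) is the technical heart; it finishes the proof, for if $(\Gamma_n)$ were uniformly coarsely equivalent to a family $(Q_n)$ of quotients of an amenable group, step (i) would make $(Q_n)$ uniformly amenable, and transferring a bounded F\o lner set back along the equivalence would make $(\Gamma_n)$ uniformly amenable, against step (ii). Concretely, let $f_n\colon\Gamma_n\to Q_n$ realize the equivalence; from the two defining conditions one extracts, all uniformly in $n$: a non-decreasing modulus $\rho_+$ with $d(f_nx,f_ny)\leq\rho_+(d(x,y))$; a non-decreasing modulus $\sigma$ with $d(x,y)\leq\sigma(d(f_nx,f_ny))$, in particular a constant $s_0$ with $f_n(x)=f_n(y)\Rightarrow d(x,y)\leq s_0$; and a constant $D_0$ with $f_n(\Gamma_n)$ being $D_0$-dense in $Q_n$ (this uniform coarse density is part of the meaning of ``uniform coarse equivalence'' --- equivalently, the existence of a uniform coarse inverse --- and without it the notion is not symmetric and the proposition fails for trivial reasons). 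Let $M$ bound, uniformly in $n$, the cardinality of every ball of radius $\max\{s_0,2D_0\}$ in every $\Gamma_n$ and every $Q_n$, as the Cayley graphs have uniformly bounded geometry. Given $R,\varepsilon>0$, apply the uniform amenability of $(Q_n)$ with the inflated parameters $R':=\rho_+(R)+D_0$ and $\varepsilon':=\varepsilon/M^2$ to get, for large $n$, a set $S\subseteq Q_n$ with $\operatorname{diam} S\leq D$ (a $D$ depending only on $R,\varepsilon$) and $\#(N_{R'}(S)\setminus S)<\varepsilon'\,\#S$, and put $F:=f_n^{-1}\bigl(N_{D_0}(S)\bigr)\subseteq\Gamma_n$. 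One checks: $F$ is nonempty with $\#F\geq\#S/M$ (by $D_0$-density and the bound $M$); $\operatorname{diam} F\leq\sigma(2D_0+D)$, uniform in $n$; and $N_R(F)\setminus F\subseteq f_n^{-1}\bigl(N_{R'}(S)\setminus S\bigr)$, whence $\#(N_R(F)\setminus F)\leq M\cdot\#(N_{R'}(S)\setminus S)<M\varepsilon'\,\#S\leq M^2\varepsilon'\,\#F=\varepsilon\,\#F$. So $F$ is the required bounded F\o lner set in $\Gamma_n$. The only real difficulty in this step is the bookkeeping: one must verify that \emph{every} constant --- $\rho_+$, $\sigma$, $s_0$, $D_0$, $M$, and the resulting diameter bound on $F$ --- is chosen independently of $n$, which is exactly what uniformly bounded geometry of the Cayley graphs, together with the uniformity built into the hypothesis, supplies.
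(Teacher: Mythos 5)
Your overall strategy is sound and genuinely different from the paper's. The paper proves this proposition by passing to coarse unions (box spaces) and invoking Property A: the coarse union of any family of finite quotients of an amenable group is coarsely amenable (Proposition~\ref{prop:cu}, proved by pushing Reiter functions down to the quotients), while the coarse union of the $\Gamma_n$ is coarsely non-amenable by the converse statement from Roe's book, and coarse amenability is invariant under coarse equivalence (citing Tu). You instead work directly with the families themselves, isolating a bounded-diameter F\o lner condition, and your transfer argument in step (iii) is a correct, self-contained replacement for the black boxes the paper cites; the bookkeeping with $\rho_+$, $\sigma$, $s_0$, $D_0$, $M$ checks out, as does the tree computation in step (ii). What your route buys is elementarity and explicitness; what it costs is generality, plus one missing ingredient.

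Two issues. First, step (ii) silently uses that the iterated squares have trivial intersection, $\bigcap_n G_n=\{1\}$: this is exactly what makes the injectivity radius (girth) of $\Gamma_n$ tend to infinity, without which you cannot lift sets of bounded diameter to the $4$-regular tree. This is not obvious --- it is Levi's theorem, which the paper cites at precisely this point --- and your proof must state and cite (or prove) it; if the intersection were a nontrivial normal subgroup $K$ with $\F_2/K$ amenable, the proposition would simply be false. Second, your step (i) only treats families of quotients arising from the construction (\ref{setup}), i.e.\ from a nested chain with trivial intersection, because the injectivity-radius argument is again what lets you push a single F\o lner set of $G$ down to the $Q_n$ without distortion. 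The proposition as stated concerns an \emph{arbitrary} family of quotients of an amenable group, where the quotient maps need not ever be injective on balls and the image of a F\o lner \emph{set} can degenerate (the paper's Reiter-function argument is designed to avoid exactly this). The restricted version you prove does suffice for the main theorem, whose second assertion only concerns towers built via (\ref{setup}); but to obtain the proposition as stated you would need to upgrade step (i), e.g.\ by pushing forward Reiter functions as in the paper and then extracting bounded-support F\o lner sets by a layer-cake argument. (A minor point: your parenthetical that without coarse density the proposition ``fails for trivial reasons'' is not right --- Property A passes to subspaces, so the paper's method rules out even uniform coarse embeddings --- but nothing in your proof depends on that remark.)
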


Let $G$ be a finitely generated discrete group, and let $\ell$ be the
word length associated to a fixed finite and symmetric set of
generators. Of the many equivalent definitions of amenabilitiy we
shall work with {\it Reiter's condition\/} -- $G$ is {\it amenable\/}
if for every $\varepsilon>0$ and for every $R>0$ there exists a
finitely supported
$\xi\in\ell^1(G)$ such that $\xi\geqslant 0$, $\|\xi\|=1$ and
\begin{equation}
\label{amen}
   \ell(g)\leqslant R \Rightarrow \| g\cdot \xi - \xi \| <\varepsilon,
\end{equation}
where the action of $G$ on $\ell^1(G)$ is defined by $g\cdot \xi(h)=\xi(g^{-1}h).$

Our main tool to prove Proposition~\ref{prop:nqe} is the use of Property A, a weak form of amenability, introduced
by Yu in the context of the Baum-Connes conjecture in topology~\cite{Yu}.

Let $X$ be a discrete metric space of {\it bounded geometry\/} -- that
is, the number of points in a ball of fixed radius is bounded, the
bound depending only on the radius of the ball and not on its center.
Of the many equivalent definitions of Property $A$ we choose
the one most closely related to Reiter's condition --  $X$ has {\it  Property
$A$\/} if for every $\varepsilon>0$ and $R>0$ there exists an $S>0$ and
for each $x\in X$ a function $\xi_x\in \ell^1(X)$ such that $\xi_x\geqslant
0$, $\|\xi_x\|=1$ and \begin{align*}
  d(x,y)\leqslant R &\Rightarrow \| \xi_x -\xi_y \| < \varepsilon, \\
  \xi_x(y) \neq 0 &\Rightarrow d(x,y)\leqslant S.
\end{align*}
The analogy with amenability being clear, we say that a metric space
having Property $A$ is {\it coarsely amenable\/} whereas one not
having Property $A$ is {\it coarsely non-amenable}.

Finally, a metric space $X$ is the {\it coarse union\/} of its
subspaces $X_n$ if $X=\sqcup X_n$ (disjoint union), and if
$d(X_n,X_m)\to \infty$ as $n+m\to\infty$. If the $X_n$ are metric
spaces each having finite diameter, then there exists a metric space
$X$ which is the coarse union of (isometric copies of) the
$X_n$. Further, any
two such unions are coarsely equivalent.  Moreover, if $Y$ is the coarse
union of the $Y_n$ then $X$ and $Y$ are coarsely equivalent
when the $X_n$ and $Y_n$ are uniformly coarsely equivalent.

We require the following slight
generalization of \cite[Prop. 11.39]{roe}.  We include a proof which is
both different from other proofs in the literature and
convenient for our result.

\begin{prop}\label{prop:cu}
  Let $G$ be a finitely generated amenable group.
  Every quotient of $G$ is amenable; the coarse union of any family of finite
  quotients of $G$ is coarsely amenable.
\end{prop}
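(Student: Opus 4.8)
The plan is to deduce both assertions from Reiter's condition by one device: push a \emph{single} finitely supported Reiter function forward along the quotient map and then translate the result \emph{inside the quotient group}. The step I expect to be the crux is making the support radius uniform over an arbitrary family of finite quotients: each finite quotient trivially has Property~$A$, but with radius equal to its own diameter, which need not be bounded; and since the family is not assumed to come from a residual chain, the kernels may all be infinite, so one cannot arrange $q$ to be injective on large balls, nor — already for $G=\Z$ and $Q=\Z/n$, where no set-theoretic section $\Z/n\to\Z$ has distortion bounded independently of $n$ — can one choose metrically controlled sections. Translating a pushed-forward Reiter function inside the quotient is intrinsic and sidesteps all of this.

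First I would record the elementary facts about the pushforward $q_*\colon\ell^1(G)\to\ell^1(Q)$, $(q_*\eta)(y)=\sum_{q(g)=y}\eta(g)$, attached to a quotient $q\colon G\twoheadrightarrow Q$ with $Q$ generated by the images of a fixed finite generating set of $G$: it is norm non-increasing, preserves positivity and the norm of non-negative functions, carries finitely supported functions to finitely supported functions, and is equivariant, $q_*(\widetilde y\cdot\eta)=q(\widetilde y)\cdot q_*\eta$; moreover any $y\in Q$ with $\ell_Q(y)\le R$ has a lift $\widetilde y\in G$ with $\ell_G(\widetilde y)\le R$, got by lifting a word of length $\le R$. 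For the first assertion: given $\varepsilon,R>0$, take a Reiter function $\xi$ for $(\varepsilon,R)$ on $G$ and put $\eta=q_*\xi$ (finitely supported, non-negative, norm one); for $\ell_Q(y)\le R$ and a short lift $\widetilde y$, equivariance gives $\|y\cdot\eta-\eta\|=\|q_*(\widetilde y\cdot\xi-\xi)\|\le\|\widetilde y\cdot\xi-\xi\|<\varepsilon$, so $Q$ is amenable.

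For the second assertion, let $\{Q_i\}_{i\in I}$ be any family of finite quotients, $q_i\colon G\twoheadrightarrow Q_i$, and let $X=\bigsqcup_iQ_i$ be a coarse union with each $Q_i$ carrying the word metric from the fixed generators; $X$ has bounded geometry, since a ball of fixed radius in $Q_i$ has size at most that of the corresponding ball in $G$, and for a fixed radius only finitely many pairs of components are close enough to interact. Given $\varepsilon,R>0$, fix a Reiter function $\xi$ for $(\varepsilon,R)$ with $\operatorname{supp}\xi\subseteq B_G(1,S_0)$, set $\mu_i=q_{i,*}\xi\in\ell^1(Q_i)$, and for $z\in Q_i$ define $\xi_z=z\cdot\mu_i\in\ell^1(Q_i)\subseteq\ell^1(X)$ — note this uses only the group structure of $Q_i$, no lift of $z$. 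Then $\xi_z\ge0$, $\|\xi_z\|=1$, and $\operatorname{supp}\xi_z\subseteq B_{Q_i}(z,S_0)\subseteq B_X(z,S_0)$; and if $z,w$ lie in the same $Q_i$ with $d(z,w)\le R$, writing $v=z^{-1}w$ and choosing a short lift $\widetilde v$ of $v$,
\[
\|\xi_z-\xi_w\|=\|\mu_i-v\cdot\mu_i\|=\|q_{i,*}(\xi-\widetilde v\cdot\xi)\|\le\|\xi-\widetilde v\cdot\xi\|<\varepsilon .
\]

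The only point left is the interface between different components: a priori finitely many pairs $i\ne j$ have $d(Q_i,Q_j)\le R$, and then $\|\xi_z-\xi_w\|=2$ for $z\in Q_i$, $w\in Q_j$. I would absorb these into one finite block: since $d(Q_i,Q_j)\to\infty$, the set $I_0=\{i:\exists\,j\ne i,\ d(Q_i,Q_j)\le R\}$ is finite, so $Y=\bigsqcup_{i\in I_0}Q_i$ is a finite metric space of some diameter $D$; redefine $\xi_z=|Y|^{-1}\mathbf{1}_Y$ for $z\in Y$, keep $\xi_z=z\cdot\mu_i$ for $z\in Q_i$, $i\notin I_0$, and take $S=\max(S_0,D)$. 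The support bound persists, $\|\xi_z-\xi_w\|=0$ when $z,w\in Y$, and by the choice of $I_0$ any pair at distance $\le R$ not entirely inside $Y$ lies in a common $Q_i$ with $i\notin I_0$, so the displayed estimate applies; hence $X$ has Property~$A$. I expect essentially all the content to sit in the uniformity issue flagged above, the one idea being to push forward a fixed Reiter function and translate it within each $Q_i$; the interface argument is routine bookkeeping.
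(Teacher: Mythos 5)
Your proposal is correct and follows essentially the same route as the paper: push a single Reiter function forward along each quotient map, translate it within each finite quotient to get the Property~$A$ functions with a uniform support radius, and absorb the finitely many mutually close components of the coarse union into one block carried by a (normalized) characteristic function. You merely spell out the verifications the paper leaves as "easily verified."
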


\begin{proof}
  Let $H$ be a quotient of $G$ and identify $H$ with a set of cosets
  $\{\, gN \,\}$.  Fix a finite and symmetric set of
  generators for $G$ and equip $G$ with the associated word length;
  equip $H$ with the word length associated to the induced generators.
  With these conventions
  \begin{equation*}
    \ell_H(x)\leqslant R\ \Longleftrightarrow\
       \text{$\exists \; g\in x$ such that $\ell_G(g)\leqslant R$}
  \end{equation*}
  and, in particular, the map $G\twoheadrightarrow H$ is contractive. Given
  $\varepsilon>0$ and $R>0$, obtain $\xi\in\ell^1(G)$ as in
  (\ref{amen}).  Define
  \begin{equation}
\label{eta}
    \eta(x) = \sum_{g\in x} \xi(g),
  \end{equation}
  so that $\eta\geqslant 0$ and $\| \eta\|=1$.  Further, when $z\in H$ has
  length at most $R$ we obtain $g\in G$ of length at most $R$ such
  that $z=gN$.  We then calculate
  \begin{equation*}
  \| z\cdot \eta -\eta \| = \sum_{x\in H} | \eta(g^{-1}x)-\eta(x)|
       \leqslant \sum_{x\in H} \sum_{h\in x} | \xi(g^{-1}h)-\xi(h)|
       = \| g\cdot \xi-\xi\| <\varepsilon.
\end{equation*}
We conclude that $H$ is amenable.

When dealing with a coarse union the essential observation is that, in
the previous argument, if $\xi$ is supported on the elements of length
at most $S$ then the same is true of $\eta$. Thus, let $H_n$ be a
family of quotients of $G$, each equipped with a length function as
above, and let $X$ be a coarse union of the $H_n$. Given
$\varepsilon>0$ and $R>0$ proceed as above -- obtain a Reiter function
$\xi$ for $G$ and define $\eta_n$ as in (\ref{eta}). For $x\in X$
define
\begin{equation*}
  \xi_x = \begin{cases}
        \chi_N, &\text{$x\in H_n$, $n\leqslant N$}\\
        x\cdot \eta_n, & \text{$x\in H_n$, $n>N$},
  \end{cases}
\end{equation*}
where $N$ is chosen large enough so that for $n>N$ the distance
between $H_n$ and any other $H_m$ is at least $R$; $\chi_N$ is the
characteristic function of $H_1\cup\dots\cup H_N$.  Finally, choose
$S$ larger than the diameter of $H_1\cup\dots\cup H_N$ and large
enough so that $\xi$ is supported on elements of length at most $S$ in
$G$.

The required properties are easily verified.
\end{proof}

\begin{proof}[Proof of Proposition~\ref{prop:nqe}]
  The  iterated squares are proper characteristic subgroups of the free group, hence,
by Levi's theorem~\cite[Ch.I, Prop.~3.3]{ls},    they have trivial intersection,
  $\cap \F_2^{(2)\ldots(2)}= \{\, 1 \,\}.$
     Thus, the coarse union of the metric
  spaces $\Gamma_n$ is an example of a coarsely non-amenable {\it box
    space\/}. See \cite[Def. 11.24 and Prop. 11.39]{roe}.  (This statement is the converse of the
  previous proposition, and can also be proved by modifying the above
  argument.) This finishes the proof as coarse amenability is invariant under coarse equivalence,
  see, for example,~\cite[Prop. 4.2]{Tu}.
\end{proof}

We conclude with two remarks.  First, we have used a very crude
invariant from coarse geometry to distinguish towers constructed
from the sequence of iterated squares (\ref{squares}) from those
constructed beginning with an amenable group in (\ref{setup}) -- the
former are coarsely non-amenable while the latter are coarsely
amenable.  More refined invariants would be needed to establish the
existence of coarsely inequivalent towers constructed as in
(\ref{setup}) from a given non-amenable group.

Second, our construction involving the iterated squares
(\ref{squares}) is particular to the free group.  It would be
interesting to remove the hypothesis of `largeness' from our theorem.


\bibliographystyle{plain}
\bibliography{evalsA}

\end{document}